\documentclass[10pt]{article}

\usepackage[english]{babel}
\usepackage{latexsym,amssymb,amsmath,amsthm,amsfonts}
\usepackage{float,graphicx}
\usepackage{adjustbox}
\usepackage{pgfplots}
\usepackage{url} 
\pgfplotsset{compat=1.18}
\usepackage[breaklinks=true]{hyperref}
\usepackage{algorithm}
\usepackage{algpseudocode}

\vfuzz2pt 
\hfuzz2pt 

\newtheorem{theorem}{Theorem}[section]

\theoremstyle{definition}

\theoremstyle{remark}

\numberwithin{equation}{section}

\title{Genetic algorithm and edge-colorings of complete graphs with connected classes}

\author{
Jorge Cervantes-Ojeda \footnotemark[1]
\and María C. Gómez-Fuentes \footnotemark[1]
\and Christian Rubio-Montiel \footnotemark[2]
 }


\begin{document}
\maketitle

\def\thefootnote{\fnsymbol{footnote}}
\footnotetext[1]{Universidad Autónoma Metropolitana, Unidad Cuajimalpa, Mexico, {\tt [jcervantes|mgomez]@cua.uam.mx}. ORCID: [0000-0002-2267-7165,0000-0003-0033-4476].}
\footnotetext[2]{División de Matemáticas e Ingeniería, FES Acatlán, Universidad Nacional Autónoma de México, Mexico,
{\tt christian.rubio@acatlan.unam.mx}. ORCID: 0000-0003-1474-8362.}

\begin{abstract}
In this study, the Rank Genetic Algorithm was adapted to address a problem in the field of Chromatic Graph Theory, namely, on the parameter called the connected-pseudoachromatic index. We successfully improved several previously known bounds of that index of the complete graph.
\end{abstract}

\textbf{Keywords}: Genetic algorithm, rank GA, edge-coloring, connected classes, Hadwiger number.

\textbf{Mathematics Subject Classification}: 05C15, 05C85

\section{Introduction}
\label{sec:in}
Genetic Algorithms (GA) have emerged as powerful tools, taking advantage of efficient strategies to address complex optimization problems prevalent in real-world applications in diverse industries, including transportation and manufacturing \cite{Gen&Lin2023}. Although GAs are widely recognized for their effectiveness in addressing practical engineering challenges, their interdisciplinary potential extends to the realm of theoretical advancements in mathematical research.

In our study, we explore the application of a particular GA: the Rank Genetic Algorithm (Rank GA) introduced in \cite{cervantes2008rank} to address a graph theory problem. The Rank GA incorporates genetic operators that facilitate both local search (exploitation) and global search (exploration) simultaneously. This unique feature allows for a delicate balance between exploration and exploitation. With dedicated individuals focusing on exploration, the algorithm can escape local optima. Simultaneously, during local searches, the genes of the best individuals are exploited to refine the overall solution, contributing to the algorithm's effectiveness. Due to its versatility, the Rank GA has proven to be highly effective in solving combinatorial optimization problems, as demonstrated in successful applications \cite{cervantes2023applying,MR3924131,MR4555460}.

In recent decades, GAs have emerged as effective tools to address problems of graph theory with practical applications. Specifically, various GAs have been developed to tackle the Graph Coloring Problem (GCP), a fundamental issue in Graph Theory. In the simplest instance of the GCP, the objective is to assign colors to the vertices of a graph in such a way that no two adjacent vertices share the same color. The goal is to find a valid coloring of the graph with the minimal number of colors. Given its NP-hard nature and numerous practical applications, the GCP stands out as the most extensively researched combinatorial optimization problem in both computer science and mathematics \cite{Douiri2015}. In \cite{MR4076285}, we can find the most significant work in which GAs have been used to address the GCP. In addition to GCP, GAs have also been applied to other graph problems, among the most recent works are: the Shortest Path Problem \cite{CosmaEtAl2021}, the Traveling Salesman Problem \cite{XuEtAl2022}, Maximum Matching in bipartite graphs \cite{Long2019}, and the Maximum Edge-Disjoint Path Problem \cite{Gen&Lin2023}.

We are aware of the following studies where a genetic algorithm played a crucial role in addressing theoretical challenges: In \cite{MR3924131} the Rank GA was used to find rainbow $t$-colorings of the family of Moore cages with girth 6: $(k; 6)$, achieving an improvement in the upper bound for the 4-rainbow coloring of a $(4; 6)$-cage. In addition, in \cite{MR4555460}, the Rank GA was adapted with recombination and mutation operators from graph theory. These operators enabled the discovery of Hajós operations to construct the symmetric 5-cycle from the complete symmetric digraph of order 3 using only Hajós operations. Finally, in \cite{cervantes2023applying} real solutions were identified by a genetic algorithm, thus contributing to the validation of a conjecture.

Our specific attention is directed towards the connected-pseudoachromatic index of the complete graph, denoted $\psi_c'(K_n)$ (for short $\psi_c(n)$). Determining $\psi_c(n)$ is difficult, and so far, there have been very few exact results for $\psi_c(n)$ of complete graphs with connected classes. However, it has been possible to bound it using analytical methods \cite{MR3695270}.

In this interdisciplinary study, our objective was to refine the known bounds of $\psi_c(n)$ exploiting the capabilities of the Rank GA. We successfully improved several of the previously known bounds. This synergy between Genetic Algorithms and mathematical theory contributes to the intersection of practical problem-solving and theoretical advancements.

The structure of this paper is as follows. In Section \ref{sec:problem_statement}, we state the problem and include the necessary definitions to understand it. In Section \ref{sec:rank_ga}, we describe the Rank GA and in Section \ref{sec:results}, we present the obtained results. Finally, Section \ref{sec:conclusion} contains the conclusions.

\section{Problem Statement}\label{sec:problem_statement}
The maximum number of colors in a connected and complete coloring of a complete graph $K_n$ of order $n$ that can be achieved is called the connected-pseudoachromatic index and is denoted $\psi_c(n)$. There are certain values of $n$ for which it is known, and for others, only the lower and upper bounds are known \cite{MR3265137,MR3695270}. The problem we address here, in particular, consists in improving the known bounds.

\subsection{Edge-Colorings of a Complete Graph}
A \emph{graph} is defined as a set of vertices $V$ and a set of edges $E$, and is denoted by $G=(V,E)$. It is important to note that, in a graph, there cannot be repeated edges or edges connecting a vertex to itself, since these conditions would violate the definition of a graph. A \emph{complete graph} is a graph in which each pair of vertices is connected by an edge.

A \emph{complete edge-coloring} in a complete graph is an assignment of colors to edges such that, for every pair of colors, there is at least a common vertex. Figure \ref{fig1} (left) shows an example of a complete edge-coloring of $K_4$, while Figure \ref{fig1} (right) shows an example of a noncomplete coloring, since the color pair (``dotted'', ``dashed'') does not have a common vertex.

\begin{figure}[htbp]
  \begin{center}
    \includegraphics{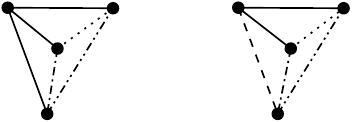}
    \caption{Edge-colorings in the complete graph $K_4$. (Left) A complete coloring with 4 colors. (Right) A noncomplete coloring with 5 colors.}
    \label{fig1}
  \end{center}
\end{figure}

A \emph{chromatic class} is the set of all edges of a colored graph that have the same color. The \emph{induced subgraph} of a chromatic class $X$ is formed by the edges of the chromatic class $X$ and the vertices connected to those edges. 

A graph is \emph{connected} when there is at least one path between each pair of its vertices. A chromatic class is \emph{connected} if its induced subgraph is connected. Each chromatic class in Figure \ref{fig1} is connected. A coloring is \emph{connected} if all its chromatic classes are connected chromatic classes.

The \emph{connected-pseudoachromatic index} $\psi'_c(G)$ of a graph $G$ is the maximum number of colors $k$ for which a connected and complete edge-coloring exists in the graph $G$ using $k$ colors, when $G$ is connected; otherwise, it is defined as the maximum index over its connected components. When we color vertices and each chromatic class is connected, the parameter is also known as the Hadwiger number of $G$ from the perspective of minor graphs.

When a graph is complete, the connected-pseudoachromatic index is denoted by $\psi_c(n)$ where $n$ is the number of vertices in the complete graph $K_n$. 

In \cite{MR3695270} it is shown that $\psi_c(n)$ can be bounded by analytical methods obtaining that $\psi_c(n)=\Theta(n^{3/2})$. Table \ref{table1} shows the known bounds of $\psi_c(n)$, for complete and connected graphs of $2\leq n\leq 31$ vertices. With the use of the computer, complete colorings can be achieved, and thus improve the lower bound of $\psi_c(n)$.

\begin{table}[htbp]
\begin{adjustbox}{center}
\begin{tabular}{|c|cccccccccc|}
\hline
     $n$&2&3&4&5&6&7&8&9&10&11  \\
\hline
 Upper& \emph{1}& \emph{3}& \emph{4}& \emph{6}& \emph{7}& \emph{10}& 14& 18& 22&25\\
 Lower& \emph{1}& \emph{3}& \emph{4}& \emph{6}& \emph{7}& \emph{10}& 11& 12& 13&14\\
\hline
\hline
     $n$&12&13&14&15&16&17&18&19&20&21\\
\hline
 Upper& 28& 31& 34& 37& 40& 45& 51& 57& 63&70\\
 Lower& 19& 26& 27& 28& 29& 30& 31& 32& 33&42\\
\hline
\hline
     $n$&22&23&24&25&26&27&28&29&30&31\\
\hline
 Upper& 74& 78& 82& 86& 90& 94& 98& 102& 106&116\\
 Lower& 43& 44& 45& 46& 47& 48& 49& 50& 51&93\\
\hline
\end{tabular}
\end{adjustbox}
\caption{Values for $2\protect\leq n\protect\leq 7$ and lower bounds for $8\protect\leq n\protect\leq 12$ given in \cite{MR3265137}. The other values were given in \cite{MR3695270}.}\label{table1}
\end{table}

\section{Description of the Rank GA}\label{sec:rank_ga}
Genetic algorithms draw inspiration from the genetic evolution process observed in living organisms, addressing challenges in combinatorial optimization, network design, routing, scheduling, location and allocation, reliability design, and logistics \cite{Gen&Lin2023}. Leveraging this evolutionary concept, we use the Rank GA to solve the problem described in Section \ref{sec:problem_statement}, making use of its ability to escape local optima and refine solutions. The Rank GA operates by evaluating and ranking the population before applying each genetic operator. This ranking ensures that the corresponding genetic operator is uniquely applied to each individual based on their position in the ordered population. This approach enhances the ability to search for optimal solutions and to improve the bounds obtained thus far.

Individuals recombine with others of similar rank, so if one individual is fit due to possessing certain beneficial genes and another individual is fit for having different advantageous genes, their recombination can yield individuals that combine both sets of genes. If two individuals are unfit, they are likely to be quite different because there are numerous ways to be unfit. When recombined, it is highly probable that the offspring will also be distinct, exploring areas distant from the parents and, thereby, increasing the likelihood of escaping local optima. The fittest individuals tend to propagate their genes more within the population than the less fit ones. This allows the concentration of the population around the best individual, guiding the population towards the best individuals by exploiting their genes more.

In summary, with the Rank GA the worst individuals are dedicated to exploration making possible to escape from the local optima and, at the same time, the best individuals perform local search exploiting their genes to refine the best solution obtained so far. Therefore, the Rank GA is particularly suitable for optimization problems where finding a global optimum is challenging due to complex search spaces, as in the problem that we address in this study.

\subsection{The Genetic Operators of the Rank GA}
The Rank GA follows a specific process, starting with random initialization of the population. Subsequently, the individuals are evaluated and ranked from the best to the worst based on their fitness. The rank of the \(i\)-th individual in the ordered population, denoted as \(r_i\), is determined using equation \eqref{eq:rank}.

\begin{equation}
\ r_i = \frac{i}{N-1} \  
\label{eq:rank}
\end{equation}

where \(i\) ranges from 0 to \(N-1\), and \(N\) represents the number of individuals in the population.

Once individuals are ranked based on their fitness, from best to worst, the Rank GA applies the following operations iteratively until a stopping criterion is met: \textit{Rank-based selection}, \textit{Sort and rank}, \textit{Rank-based recombination}, \textit{Evaluate, sort, and rank}, \textit{Rank-based mutation} and \textit{Evaluate, sort, and rank}.

\subsubsection{Rank-Based Selection}
\textit{Rank-based selection} involves the cloning of individuals according to a two-step procedure. The desired number of clones, \texttt{cloneNbr}, is calculated for each individual $i$ using Equation \eqref{eq:selection}:

\begin{equation}
\ \text{cloneNbr}_i = S \cdot (1 - r_i)^{S-1}  
\label{eq:selection}
\end{equation}

Here, $r_i$ represents the rank of individual $i$, ranging from 0 to 1, and $S$ corresponds to the selective pressure of the rank-based selection operator. In the first step, the floor of $cloneNbr_i$ is taken, resulting in the number of clones to generate for each individual $i$. In the second step, additional clones are produced as follows: the procedure starts with $i=0$ and, while the total number of clones is less than the original number of individuals $N$, the fractional part of $cloneNbr_i$ is used as the probability of producing an additional individual clone $i$. If a random number in the range [0,1) is less than that probability, a clone of individual $i$ is generated. The value of $i$ is increased by modulo $N$ to ensure that it passes through all individuals.

\subsubsection{Rank-Based Recombination}
The \textit{Rank-Based Recombination} operator forces mating between individuals with indices $i$ and $i$+1 in the ordered population, where $i$ increases by 2. This strategy ensures that individuals mate with others who are close in terms of rank. Using \textit{ rank-based recombination} with a selective pressure value of $S=3$, the Rank GA achieves a balance between preserving the advantageous genes of the fittest individual and introducing local search through recombination with other fit individuals.

\subsubsection{Mutation by Rank}
The mutation probability assigned to each individual, denoted as $p_i$, is determined by a monotonically increasing function of the rank. The function is defined by Equation \eqref{eq:mutation} as follows:

\begin{equation}
p_i = p_{\text{max}} \cdot r_i^{(\frac{\ln(p_{\text{max}} \cdot G)}{\ln(N-1)})}  \label{eq:mutation}
\end{equation}

In this equation, $p_{\text{max}}$ represents the maximum mutation probability that can be assigned, $r_i$ is the rank of individual $i$, $G$ is the genotype size, and $N$ is the population size. The function assigns a mutation probability of 0 to the best individual ($r_0=0$), $1/G$ to the second-best individual, and $p_{\text{max}}$ to the worst individual.

\subsection{Pseudocode}

The pseudocode for the Rank Genetic Algorithm (Rank GA), outlining the key steps involved in its execution, is shown in Algorithm \ref{alg:rank_ga} on page \pageref{alg:rank_ga}.

\begin{algorithm}
\caption{Rank Genetic Algorithm (Rank GA)}
\label{alg:rank_ga}
\begin{algorithmic}
\State \textbf{STEP 1: Initialization}
\State Initialize population \(\mathcal{P}\) randomly with \(N\) individuals
\State Evaluate the fitness of each individual in \(\mathcal{P}\)

\While{stopping criterion not met}
    \State \textbf{STEP 2: Rank-Based Selection}
    \State Sort \(\mathcal{P}\) by fitness in descending order
    \State Compute \(r_i\) for each individual \(i\) using equation \eqref{eq:rank}
    
    \State \textbf{STEP 2.A: Cloning Based on Integer Part of Clone Number}
    \State Initialize an empty list \(\mathcal{P}'\) to store clones
    \For{each individual \(i\) in \(\mathcal{P}\)}
        \State Calculate \texttt{cloneNbr\(_i\)} using equation \eqref{eq:selection}
        \State Generate floor(\texttt{cloneNbr\(_i\)}) clones of individual \(i\) and add them to \(\mathcal{P}'\)
    \EndFor
    
    \State \textbf{STEP 2.B: Fractional Cloning (Cyclic)}
    \State Initialize \(i = 0\)
    \While{size of \(\mathcal{P}'\) is less than \(N\)}
        \State Let \(f_i\) be the fractional part of \texttt{cloneNbr\(_i\)}
        \If{random number $<$ \(f_i\)}
            \State Generate one additional clone of individual \(i\) and add it to \(\mathcal{P}'\)
        \EndIf
        \State Increment \(i\) by 1 (modulo \(N\)) to cycle through the population
    \EndWhile
    
    \State Replace original population \(\mathcal{P}\) with the cloned population \(\mathcal{P}'\)
    
    \State \textbf{STEP 3: Rank-Based Recombination}
    \State Sort \(\mathcal{P}\) by fitness in descending order
    \State Compute \(r_i\) for each individual \(i\) using equation \eqref{eq:rank}
    \For{each pair of individuals \(i\) and \(i+1\) in \(\mathcal{P}\)}
        \State Perform recombination between individuals \(i\) and \(i+1\)
        \State Replace the parents \(i\) and \(i+1\) with the two offspring
    \EndFor
    \State Evaluate the fitness of each individual in \(\mathcal{P}\)
    
    \State \textbf{STEP 4: Mutation by Rank}
    \State Sort \(\mathcal{P}\) by fitness in descending order
    \State Compute \(r_i\) for each individual \(i\) using equation \eqref{eq:rank}
    \For{each individual \(i\) in \(\mathcal{P}\)}
        \State Calculate mutation probability \(p_i\) using equation \eqref{eq:mutation}
        \For{each gene \(g\) in the genotype of individual \(i\)}
            \If{random number $<$ \(p_i\)}
                \State Mutate gene \(g\)
            \EndIf
        \EndFor
    \EndFor
    \State Evaluate the fitness of each individual in \(\mathcal{P}\)
    
\EndWhile

\State \textbf{STEP 5: Return the Best Individual}
\State \textbf{Return} the best individual in \(\mathcal{P}\)
\end{algorithmic}
\end{algorithm}

\section{Adaptation of the RankGA to the Edge Coloring Problem}\label{sec:rankga_adaptation}

To address the problem of improving the bounds of the connected-pseudoachromatic index $\psi_c(n)$ of a complete graph $K_n$, we adapted the \textbf{RankGA} (Ranking Genetic Algorithm). This section describes the representation, fitness function, and mutation mechanism used in this adaptation.

\subsection{Representation}

Each solution, or individual, in the genetic algorithm is represented as an array of integers, where each integer corresponds to the color assigned to a specific edge in $K_n$. The range of possible values for each gene is from $0$ to $(\text{numColors} - 1)$, representing the available palette of colors. The length of the array equals the total number of edges in the complete graph, $\binom{n}{2}$. This representation encodes a coloring of the graph that the algorithm evaluates and evolves toward better solutions.

\subsection{Fitness Function}

The fitness function is designed to evaluate the quality of an edge coloring with respect to the constraints and objectives of the problem. It incorporates multiple criteria:

\begin{itemize}
    \item \textbf{Number of Colors Used}: The function counts the distinct colors used in the coloring.
    \item \textbf{Vertex Connections}: For any two edges sharing a vertex, their colors must differ. This is enforced by tracking the presence of each color at every vertex.
    \item \textbf{Pairwise Connectivity}: Penalties are applied if any two used colors do not share at least one vertex in their respective edges.
    \item \textbf{Disjoint Color Classes}: Colors forming disconnected components are penalized.
    \item \textbf{Color Distribution}: The standard deviation and average of the edge counts per color are computed to encourage uniform distribution of edges across colors.
\end{itemize}

The fitness function is calculated as:
\[
\text{fitness} = \alpha - (\beta \cdot \text{weightPairs}) - (\gamma \cdot \text{weightColors}) - (\text{std} \cdot \text{weightStd}) - (\text{avg} \cdot \text{weightAvg}),
\]
where:
\begin{itemize}
    \item $\alpha$ is the number of distinct colors used.
    \item $\beta$ penalizes unconnected color pairs.
    \item $\gamma$ penalizes disjoint components.
    \item $\text{std}$ and $\text{avg}$ account for uneven color distribution.
    \item $\text{weightPairs}$, $\text{weightColors}$, $\text{weightStd}$, and $\text{weightAvg}$ are weighting factors.
\end{itemize}

A solution satisfying all constraints and using the maximum number of colors achieves a fitness value equal to $\alpha$.

\subsection{Mutation}

The mutation operator introduces diversity into the population by altering the color of a randomly selected edge. The new color is chosen randomly from the available palette, ensuring that the solution remains valid. This mechanism allows the algorithm to explore new regions of the search space while maintaining feasibility.

\section{Results}\label{sec:results}

\subsection{On the lower bound}
Table \ref{table2} presents the improved lower bounds obtained through the Rank GA for various values of $n$. The known upper bounds for $\psi_c(n)$ are also shown for reference in Table \ref{table2}. By comparing the improved lower bounds with the existing lower and upper bounds, we observed significant reductions in the gap between them.

\begin{table}[htbp]
\begin{adjustbox}{center}
\begin{tabular}{|c|cccccccccc|}
\hline 
$n$ & $2$ & $3$ & $4$ & $5$ & $6$ & $7$ & $8$ & $9$ & $10$ & $11$\\
\hline 
Upper & \emph{1} & \emph{3} & \emph{4} & \emph{6} & \emph{7} & \emph{10} & $14$ & $18$ & $22$ & $25$\\
Lower Improved &  &  &  &  &  &  &  & $13$ & $16$ & $18$\\
Lower & \emph{1} & \emph{3} & \emph{4} & \emph{6} & \emph{7} & \emph{10} & $11$ &  &  & \\
\hline
\hline 
$n$ & $12$ & $13$ & $14$ & $15$ & $16$ & $17$ & $18$ & $19$ & $20$ & $21$\\
\hline 
Upper & $28$ & $31$ & $34$ & $37$ & $40$ & $45$ & $51$ & $57$ & $63$ & $70$\\
Lower Improved & $21$ &  &  &  &  & $32$ & $35$ & $37$ & $39$ & $43$\\
Lower &  & $26$ & $27$ & $28$ & $29$ &  &  &  &  & \\
\hline
\hline 
$n$ & $22$ & $23$ & $24$ & $25$ & $26$ & $27$ & $28$ & $29$ & $30$ & $31$\\
\hline 
Upper & $74$ & $78$ & $82$ & $86$ & $90$ & $94$ & $98$ & $102$ & $106$ & $116$\\
Lower Improved & $46$ & $50$ & $52$ & $54$ & $55$ & $59$ & $62$ & $64$ & $66$ & \\
Lower &  &  &  &  &  &  &  &  &  & $93$\\
\hline 
\end{tabular}
\\\end{adjustbox}
\caption{Improved lower bounds with the Rank GA for $\psi_{c}(n)$ and $n\leq 31$.}\label{table2}
\end{table}

Figure \ref{fig:plot} visually represents the values detailed in Table \ref{table2}. As can be seen, the known lower bound presents sudden steps at certain graph sizes, while the improved lower bounds present smoother growth.

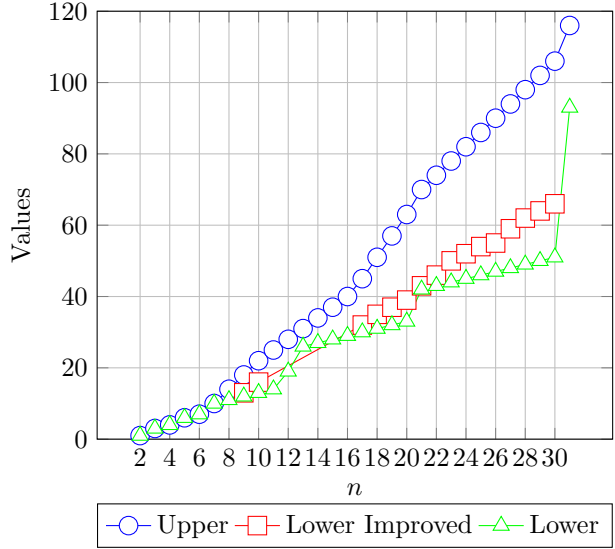
\begin{figure}[htbp]
    \centering
    \begin{adjustbox}{center}
        \begin{tikzpicture}
            \begin{axis}[
                xlabel={$n$},
                ylabel={Values},
                legend style={at={(0.5,-0.15)}, anchor=north, legend columns=-1},
                symbolic x coords={2,3,4,5,6,7,8,9,10,11,12,13,14,15,16,17,18,19,20,21,22,23,24,25,26,27,28,29,30,31},
                xtick={2,4,6,8,10,12,14,16,18,20,22,24,26,28,30},
                ymin=0,
                ymax=120,
                grid=major,
                mark size=3.5,
                mark options={solid, fill=white}
            ]

            \addplot[blue, mark=*] coordinates {
                (2,1) (3,3) (4,4) (5,6) (6,7) (7,10) (8,14) (9,18) (10,22) (11,25) (12,28) (13,31) (14,34) (15,37) (16,40) (17,45) (18,51) (19,57) (20,63) (21,70) (22,74) (23,78) (24,82) (25,86) (26,90) (27,94) (28,98) (29,102) (30,106) (31,116)
            };
            \addlegendentry{Upper};

            \addplot[red, mark=square*] coordinates {
                (9,13) (10,16) (17,32) (18,35) (19,37) (20,39) (21,43) (22,46) (23,50) (24,52) 
                (25,54) (26,55) (27,59) (28,62) (29,64) (30,66)
            };
            \addlegendentry{Lower Improved};

            \addplot[green, mark=triangle*] coordinates {
                (2,1) (3,3) (4,4) (5,6) (6,7) (7,10) (8,11) (9,12) (10,13) (11,14) (12,19) (13,26) (14,27) (15,28) (16,29) (17,30) (18,31) (19,32) (20,33) (21,42) (22,43) (23,44) (24,45) (25,46) (26,47) (27,48) (28,49) (29,50) (30,51) (31,93)
            };
            \addlegendentry{Lower};

            \end{axis}
        \end{tikzpicture}
    \end{adjustbox}
    \caption{Improved lower bounds with the Rank GA for $\psi_{c}(n)$.}
    \label{fig:plot}
\end{figure}

\subsection{On the upper bound}\label{sec:upper_bound}
In \cite{MR3695270} it is shown that a complete graph of $n$ vertices accepts a coloring with $(\sqrt{n-1})^3/2$ colors and that a coloring with $(n-1)(\sqrt{n/2 + 1/16} + 1/4)$ colors or more cannot be complete and connected at the same time.

On the one hand, the asymptotic growth of $\psi_c(n)$ is $\Theta(n^{\frac{3}{2}})$, on the other hand, the coefficient of the main term of the lower bound is $\frac{1}{2}$, while it is $\frac{1}{\sqrt{2}}$ for the upper bound. This gives a significant difference in the values in Table \ref{table1}.

\begin{theorem}\label{teo1}
If $n\geq8$, then an approximate upper bound is \[\psi_{c}(n) \leq k\approx \frac{n(n-1)}{\sqrt{4n-3}-1} = \frac{1}{2}n^{\frac{3}{2}}+O(n),\]
\end{theorem}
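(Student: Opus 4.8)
The plan is to combine a count over color classes (exploiting connectedness) with a count over vertices (exploiting completeness), and then feed both into an elementary optimization. First I would fix a connected and complete edge-coloring of $K_n$ with $k$ colors; for each color $i$ let $n_i$ and $e_i$ be the numbers of vertices and edges of its (connected) class. Connectedness gives $e_i \ge n_i-1$, and since the classes partition $E(K_n)$ we get $\sum_i n_i \le \sum_i e_i + k = \binom{n}{2}+k$. Next, for a vertex $u$ let $c(u)$ be the number of distinct colors on the edges at $u$; then $c(u)\le n-1$, $\sum_u c(u)=\sum_i n_i$, and completeness is exactly the statement that every pair of colors occurs together at some vertex, i.e. $\sum_u\binom{c(u)}{2}\ge\binom{k}{2}$ (equivalently $\sum_u c(u)(c(u)-1)\ge k(k-1)$).

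The second step is to put the target bound in a workable form. Rationalizing, $\frac{n(n-1)}{\sqrt{4n-3}-1}=\frac{n(\sqrt{4n-3}+1)}{4}$, and (after squaring $\frac{4k}{n}-1\le\sqrt{4n-3}$) the inequality $k\le\frac{n(\sqrt{4n-3}+1)}{4}$ is equivalent to the quadratic $4k^2-2nk\le n^3-n^2$, i.e. to $2k^2\le n\big(k+\binom{n}{2}\big)$. Since $k+\binom{n}{2}\ge\sum_i n_i$ by the first step, it suffices to establish $2k^2\le n\sum_i n_i$ — equivalently, that the color classes span, on average, at least $\tfrac{2k}{n}$ vertices. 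The expansion $\frac{n(\sqrt{4n-3}+1)}{4}=\tfrac12 n^{3/2}+O(n)$ then gives the asymptotics; one also checks the formula is consistent with the small known values (e.g. it yields $10.5$ at $n=7$, matching $\psi_c(7)=10$).

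The crux — and the main obstacle — is obtaining the constant $\tfrac12$ rather than $\tfrac{1}{\sqrt2}$. Using only $c(u)\le n-1$ in the completeness inequality gives $c(u)(c(u)-1)\le(n-2)c(u)$, hence $\sum_i n_i=\sum_u c(u)\ge\frac{k(k-1)}{n-2}$; combined with $\sum_i n_i\le\binom{n}{2}+k$ this only yields $k\lesssim\tfrac1{\sqrt2}n^{3/2}$, which is the leading constant already in \cite{MR3695270} and is about a factor $\sqrt2$ short. The slack is that $c(u)(c(u)-1)\le(n-2)c(u)$ is sharp only when $c(u)=n-1$, and the extremal profile for the resulting optimization has $c(u)=n-1$ on roughly $n/2$ vertices and $c(u)\approx1$ on the rest; but a vertex with $c(u)=1$ has all $n-1$ of its edges in one class, so if this occurs at $\Theta(n)$ vertices those classes coincide and a single color absorbs a constant fraction of all $\binom{n}{2}$ edges — impossible once $k$ is large, since completeness already forces every one of the $k$ classes to span $\gtrsim k/n$ vertices. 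The plan is therefore to rule out such concentration quantitatively — e.g. via a convexity/variance estimate showing that, for a genuine edge-coloring, $\sum_u\binom{c(u)}{2}$ cannot exceed (up to lower-order terms) the value it would take with the $c(u)$ all equal to $\tfrac1n\big(\binom{n}{2}+k\big)\approx\tfrac n2$ — which converts $\binom{k}{2}\le\sum_u\binom{c(u)}{2}$ into $2k^2\le n\sum_i n_i$ up to lower-order corrections. Solving the resulting quadratic for $k$ and carrying along the lower-order terms yields the closed form $\frac{n(n-1)}{\sqrt{4n-3}-1}$; the hypothesis $n\ge8$ is exactly the range in which this improves on the previously recorded upper bounds.
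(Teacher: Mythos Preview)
Your route is genuinely different from the paper's. The paper never introduces the vertex colour--degrees $c(u)$ or the pair--covering inequality $\sum_u\binom{c(u)}{2}\ge\binom{k}{2}$. Instead it fixes the \emph{minimum} class size $x$, records the trivial bound $k\le f_n(x):=\frac{n(n-1)}{2x}$ from the partition of $E(K_n)$, and then produces a second function $g_n(x)$ by estimating how many other classes can be incident to a smallest class spanning $x+1$ vertices. The closed form $\frac{n(n-1)}{\sqrt{4n-3}-1}$ comes out by solving $f_n(x_0)=g_n(x_0)$, which yields $x_0=\frac{\sqrt{4n-3}-1}{2}$. Your quadratic reformulation $2k^2\le n\bigl(k+\binom{n}{2}\bigr)$ is a clean equivalent of the same target, but arrived at from a different side (global double counting rather than the local ``look at one small class'' picture).

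The decisive step in your outline, however, is not carried out, and as stated it points the wrong way. You need an \emph{upper} bound on $\sum_u\binom{c(u)}{2}$ by its value at equal $c(u)$'s, but $t\mapsto\binom{t}{2}$ is convex, so Jensen gives the reverse inequality $\sum_u\binom{c(u)}{2}\ge n\binom{\bar c}{2}$: spreading the $c(u)$'s makes the completeness constraint \emph{easier}, not harder, to satisfy. Your qualitative remark that many vertices with $c(u)\approx 1$ would force a few giant colour classes is correct, but it does not by itself deliver $2k^2\le n\sum_i n_i$; what is missing is a quantitative anti--concentration statement for the profile $(c(u))_u$, and you have not said where it would come from. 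In fairness, the paper's argument is explicitly heuristic too (the theorem is stated with~$\approx$): its function $g_n(x)$ is obtained via the clause ``on average each class is $\frac{2x}{x+1}$ incident to $\varsigma^{-1}(k)$'', which is not established as a worst--case bound. So both arguments reach the constant $\tfrac12$ through a plausibility step rather than a proof; the difference is that the paper's missing step is a localized averaging hypothesis about incidences to one small class, whereas yours is a global variance bound on the $c(u)$'s whose naive (Jensen) form is actually false.
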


\begin{proof}
Let $n\geq 8$ and  $\varsigma\colon E(K_n) \rightarrow \{1,\dots,k\}$ a connected and complete edge-coloring of $K_n$ with $k=\psi_c(n)$ colors.

Let $x$ be the cardinality of the smallest chromatic class of $\varsigma$, that is, let $x= min\{ \left|\varsigma^{-1}(i)\right| : i\in \{1,\dots,k\} \}$. Without loss of generality, suppose that $x= \left|\varsigma^{-1}(k)\right|$.

On the one hand, $\varsigma$ defines a partition of $E(K_n)$ and it follows that $k\leq f_n(x):=n(n-1)/(2x)$.

On the other hand, $\varsigma$ is connected, therefore, $\varsigma^{-1}(k)$ is at least a tree subgraph. The maximum number of spanning trees in the subgraph $H=K_{x+1}$, such that $V(H)=V(\varsigma^{-1}(k))$, is at most $\frac{\binom{x+1}{2}}{x}$, then the chromatic class $x$ is incident to at most $\frac{\binom{x+1}{2}}{x}-1=\frac{x-1}{2}$ other chromatic classes. In addition, there are $(x+1)(n-(x+1))$ edges that are incident to some vertex of $\varsigma^{-1}(k)$ at least once, but on average each class is $\frac{2x}{x+1}$ incident to $\varsigma^{-1}(k)$. Since $\varsigma$ is complete, the number of chromatic classes incident to $\varsigma^{-1}(k)$ containing no edge in $H$ is at most 
\begin{equation}\label{eq5}
\frac{(x+1)(n-x-1)}{2x/(x+1)}
\end{equation}
in average.

Hence, there are at most $g_n(x)-1$ chromatic classes incident with some edge in $\varsigma^{-1}(k)$ where $g_n(x)-1:=\frac{(x+1)^2}{2x}(n-(x+1))+\frac{x-1}{2}$, according to the hypothesis of the average degree, i.e.
\[g_n(x)=\frac{n(x^2+2x+1)-x^3-2x^2-2x-1}{2x}.\]

Then, we have $\psi_{c}(n)\leq k\approx \min\{f_n(x),g_n(x)\}$ and then
\[\psi_{c}(n)\leq k\approx \max\left\{ \min\{f_n(x),g_n(x)\} \textrm{ with } x\in\mathbb{N}\right\}.\]

As $f_n$ is a hyperbola and $g_n$ is a parabola there are two positive values $x_0$ and $x_1$ such that $x_0<x_1$, $g_n(x_0)=f_n(x_0)$ and $g_n(x_1)=f_n(x_1)$ for which $f_n(x) \leq g_n(x)$ for $x_0 \leq x \leq x_1$ and $g_n(x)<f_n(x)$ in any other case when $x \in \mathbb{R}^{+}$. Since $f_n(x_0)>f_n(x_1)$, it follows that $g_n(x_0)=f_n(x_0)=\max\left\{ \min\{f_n(x),g_n(x)\} \textrm{ with } x\in\mathbb{R}^{+}\right\}$, see Figure \ref{fig2}.

\begin{figure} [htbp]
\begin{center}
\includegraphics{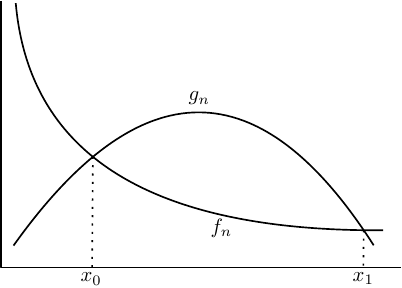}
\caption{The functions $g_n$ and $f_n$ for some fixed value $n$.}\label{fig2}
\end{center}
\end{figure}

Equation $f_n(x_0)=g_n(x_0)$ is reduced to $n^2-(x_0^2+2x_0+2)n+x_0^3+2x_0^2+2x_0+1=0$. For the positive solution, we obtain $n=x_0^2+x_0+1$ and then $x_0=\frac{\sqrt{4n-3}-1}{2}$ taking the positive solution. 

Since $f_n(x_0)=\frac{n(n-1)}{2x_0}$ we get $f_n(x_0)=\frac{n(n-1)}{\sqrt{4n-3}-1},$ therefore \[\psi_c(n)\leq k\approx \frac{n(n-1)}{\sqrt{4n-3}-1}\]
and the result is established.
\end{proof}

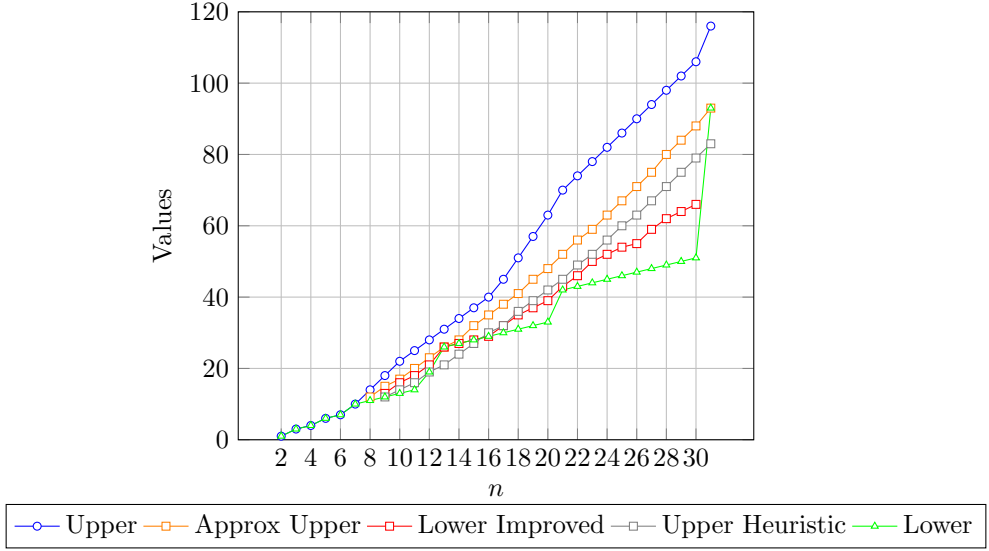
\begin{figure}[htbp]
    \centering
    \begin{adjustbox}{center}
        \begin{tikzpicture}
            \begin{axis}[
                xlabel={$n$},
                ylabel={Values},
                legend style={at={(0.5,-0.15)}, anchor=north, legend columns=-1},
                symbolic x coords={2,3,4,5,6,7,8,9,10,11,12,13,14,15,16,17,18,19,20,21,22,23,24,25,26,27,28,29,30,31},
                xtick={2,4,6,8,10,12,14,16,18,20,22,24,26,28,30},
                ymin=0,
                ymax=120,
                grid=major,
                mark size=1.5,
                mark options={solid, fill=white}
              ]

            \addplot[blue, mark=*] coordinates {
                (2,1) (3,3) (4,4) (5,6) (6,7) (7,10) (8,14) (9,18) (10,22) (11,25) (12,28) (13,31) (14,34) (15,37) (16,40) (17,45) (18,51) (19,57) (20,63) (21,70) (22,74) (23,78) (24,82) (25,86) (26,90) (27,94) (28,98) (29,102) (30,106) (31,116)
            };
            \addlegendentry{Upper};

            \addplot[orange, mark=square*] coordinates {
                (8,12) (9,15) (10,17) (11,20) (12,23) (13,26) (14,28) (15,32) (16,35) (17,38) (18,41) (19,45) (20,48) (21,52) (22,56) (23,59) (24,63) (25,67) (26,71) (27,75) (28,80) (29,84) (30,88) (31,93)
            };
            \addlegendentry{Approx Upper};

            \addplot[red, mark=square*] coordinates {
                (9,13) (10,16) (11,18) (12,21)
                (13,26) (14,27) (15,28) (16,29)
                (17,32) (18,35) (19,37) (20,39) (21,43) (22,46) (23,50) (24,52) (25,54) (26,55) (27,59) (28,62) (29,64) (30,66)
            };
            \addlegendentry{Lower Improved};

            \addplot[green, mark=triangle*] coordinates {
                (2,1) (3,3) (4,4) (5,6) (6,7) (7,10) (8,11) (9,12) (10,13) (11,14) (12,19) (13,26) (14,27) (15,28) (16,29) (17,30) (18,31) (19,32) (20,33) (21,42) (22,43) (23,44) (24,45) (25,46) (26,47) (27,48) (28,49) (29,50) (30,51) (31,93)
            };
            \addlegendentry{Lower};

            \end{axis}
        \end{tikzpicture}
    \end{adjustbox}
    \caption{Improved bounds for $\psi_{c}(n)$.}
    \label{fig:plot2}
\end{figure}

\begin{table}[htbp]
\begin{adjustbox}{center}
\begin{tabular}{|c|cccccccccc|}
\hline 
$n$ & $2$ & $3$ & $4$ & $5$ & $6$ & $7$ & $8$ & $9$ & $10$ & $11$\\
\hline 
Upper & \emph{1} & \emph{3} & \emph{4} & \emph{6} & \emph{7} & \emph{10} &  &  &  & \\
Approx Upper &  &  &  &  &  &  & $12$ & $15$ & $17$ & $20$\\
Lower Improved &  &  &  &  &  &  &  & $13$ & $16$ & $18$\\
Lower & \emph{1} & \emph{3} & \emph{4} & \emph{6} & \emph{7} & \emph{10} & $11$ &  &  & \\
\hline 
\hline 
$n$ & $12$ & $13$ & $14$ & $15$ & $16$ & $17$ & $18$ & $19$ & $20$ & $21$\\
\hline
Approx Upper & $23$ & $\emph{26}$ & $28$ & $32$ & $35$ & $38$ & $41$ & $45$ & $48$ & $52$\\
Lower Improved & $21$ &  &  &  &  & $32$ & $35$ & $37$ & $39$ & $43$\\
Lower &  & $\emph{26}$ & $27$ & $28$ & $29$ &  &  &  &  & \\
\hline 
\hline 
$n$ & $22$ & $23$ & $24$ & $25$ & $26$ & $27$ & $28$ & $29$ & $30$ & $31$\\
\hline 
Approx Upper & $56$ & $59$ & $63$ & $67$ & $71$ & $75$ & $80$ & $84$ & $88$ & \emph{93}\\
Lower Improved & $46$ & $50$ & $52$ & $54$ & $55$ & $59$ & $62$ & $64$ & $66$ & \\
Lower &  &  &  &  &  &  &  &  &  & \emph{93}\\
\hline 
\end{tabular}
\end{adjustbox}
\caption{Improved bounds for $\psi_{c}(n)$.}\label{table3}
\end{table}

As we can see in Table \ref{table3}, for the values of $n=13$ and $31$ there are good approximations, which is due to the existence of the finite projective plane of odd order $q=3$ and $5$, which gives a lower bound of $\psi_c(n)$ for $n=q^2+q+1$ when $q$ is an odd prime power according to the following result.

\begin{theorem}\cite{MR3695270} \label{teo2}
If $q$ is an odd prime power and $n=q^2+q+1$ then \[\left\lceil \frac{q}{2}\right\rceil n \leq \psi(n).\]
\end{theorem}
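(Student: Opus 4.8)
The plan is to realize the claimed lower bound by an explicit construction based on the projective plane $PG(2,q)$, which exists because $q$ is a prime power. Identify $V(K_n)$ with the $n=q^2+q+1$ points of $PG(2,q)$. Recall that $PG(2,q)$ has exactly $n$ lines, every line is incident with $q+1$ points, any two points lie on a unique common line, and any two lines meet in a unique common point. Consequently the ``line-cliques'' $\{K_\ell : \ell \text{ a line}\}$, where $K_\ell$ is the complete graph on the $q+1$ points incident with $\ell$, partition $E(K_n)$: each edge $\{u,v\}$ belongs to $K_\ell$ for the unique line $\ell$ through $u$ and $v$, and to no other $K_{\ell'}$.

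Next I would invoke the classical fact that, since $q$ is odd, $q+1=2m$ is even with $m=(q+1)/2=\lceil q/2\rceil$, and $K_{2m}$ decomposes into $m$ edge-disjoint Hamiltonian paths (the Walecki / round-robin construction). Applying this inside each line-clique $K_\ell$ yields $\lceil q/2\rceil$ Hamiltonian paths $P_\ell^1,\dots,P_\ell^{\lceil q/2\rceil}$, each spanning the $q+1$ points of $\ell$. Now color an edge $e$ of $K_n$ with the pair $(\ell,j)$, where $\ell$ is the unique line containing $e$ and $j$ is the index of the Hamiltonian path of $K_\ell$ that contains $e$. Since the line-cliques partition $E(K_n)$ and each contributes exactly $\lceil q/2\rceil$ nonempty color classes, this is an edge-coloring of $K_n$ with exactly $n\lceil q/2\rceil$ colors.

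It then remains to verify that the coloring is connected and complete. Each color class is a Hamiltonian path on the point set of some line, hence connected. For completeness, take two colors $(\ell_1,j_1)$ and $(\ell_2,j_2)$: if $\ell_1=\ell_2$ the two paths both span all $q+1$ points of $\ell_1$ and so share a vertex; if $\ell_1\ne\ell_2$, let $p$ be the unique point of $\ell_1\cap\ell_2$, and note that each path $P_{\ell_i}^{j_i}$ is incident with every point of $\ell_i$, in particular with $p$, so the two classes again share the vertex $p$. Hence the coloring is connected and complete with $n\lceil q/2\rceil$ colors, giving $\lceil q/2\rceil\, n\le \psi_c(n)$.

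The only genuinely nontrivial ingredient is the Hamiltonian-path decomposition of $K_{2m}$; the rest is bookkeeping with the incidence axioms of $PG(2,q)$. A minor point worth checking explicitly is the arithmetic consistency: each $K_{q+1}$ has $\binom{q+1}{2}=q\cdot\lceil q/2\rceil$ edges, exactly accounted for by $\lceil q/2\rceil$ paths with $q$ edges each, and $\sum_\ell \binom{q+1}{2}=n\binom{q+1}{2}=\binom{n}{2}$, which confirms that no edge of $K_n$ is left uncolored or colored twice.
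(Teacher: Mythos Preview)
Your argument is correct. The projective-plane construction together with the Walecki decomposition of $K_{q+1}$ into $(q+1)/2$ Hamiltonian paths does exactly what is needed: the line-cliques partition $E(K_n)$, each color class is a spanning path of some line (hence connected), and completeness follows from the fact that any two lines of $PG(2,q)$ intersect. Your arithmetic checks are also fine.

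One remark on the comparison you asked about: the present paper does not actually supply a proof of this theorem; it is quoted from \cite{MR3695270} and only the computation immediately following it (showing that $\lceil q/2\rceil\,n$ coincides with the approximate upper bound $n(n-1)/(\sqrt{4n-3}-1)$ when $n=q^2+q+1$) appears here. Your construction is precisely the one underlying the cited result, so in substance you have reproduced the intended proof rather than found an alternative route.
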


Our approximation $\frac{n(n-1)}{\sqrt{4n-3}-1}$ implies that $\left\lceil \frac{q}{2}\right\rceil n$ is a good approximation because $q$ is odd and then

$\frac{n(n-1)}{\sqrt{4n-3}-1}=\frac{n(q^2+q)}{\sqrt{4(q^2+q+1)-3}-1}=\frac{q(q+1)n}{\sqrt{4q^2+4q+1}-1}=\frac{q(q+1)n}{2q}=\frac{(q+1)}{2}n=\left\lceil \frac{q}{2}\right\rceil n.$

\section{Conclusions}\label{sec:conclusion}
The problem at hand involves improving, by adapting a genetic algorithm called Rank GA, the known bounds of the connected-pseudoachromatic index for the complete graph of order $n$, denoted by $\psi_c(n)$, which represents the maximum number of colors in a connected and complete coloring of a complete graph $K_n$ of order $n$. The Rank GA demonstrated significant effectiveness in improving $\psi_c(n)$ bounds for complete graphs with connected classes. 
Our paper enters into the set of problems with results for small $n$ values such that: \cite{MR3217845} where it was proved that the well-known Erd{\H o}-Faber-L{\' o}vasz conjecture is true for $n \leq 12$; \cite{MR1162526}
where the extremal values of the $C_4$-free graphs are shown for $n\leq 31$. While in \cite{MR2661401} all the 
Steiner Triplets Systems of order 19 were found.

The algorithm was able to find the solutions due to its balance in exploring the solution space and exploiting promising solutions. The improved bounds of $\psi_c(n)$ obtained by the Rank GA were validated against an analytical approximation, confirming their validity. The success of the Rank GA in enhancing the bounds of $\psi_c(n)$ may significantly contribute to theoretical advancements in Chromatic Graph Theory. Upon examination of the resulting colorings, it is observed that the sizes of the chromatic classes vary, with most sizes equal to the minimum size stipulated by Theorem \ref{teo1}. However, their distribution does not align with the color patterns described in Theorem \ref{teo2}, meaning that they lack a projective plane structure in their arrangement.

Beyond the specific problem of graph coloring, the findings underscore the interdisciplinary impact of utilizing genetic algorithms in theoretical mathematical research.

Finally, Figure \ref{fig:example_solution} presents a sample solution with $n=12$ and 21 colors. Additional examples and a Python-based visualizer can be downloaded from \footnote{\url{https://www.dropbox.com/scl/fo/ibudsyzgwj0gz8gzs8ifi/AEXSShINNc23KMsURaxS9w4?rlkey=jsgye9loui8gapqnbm89c0csq&dl=0}}. The visualizer allows users to toggle the visualization of each color class to analyze solutions. 

\begin{figure}[ht]
  \centering
  \includegraphics[width=0.8\textwidth]{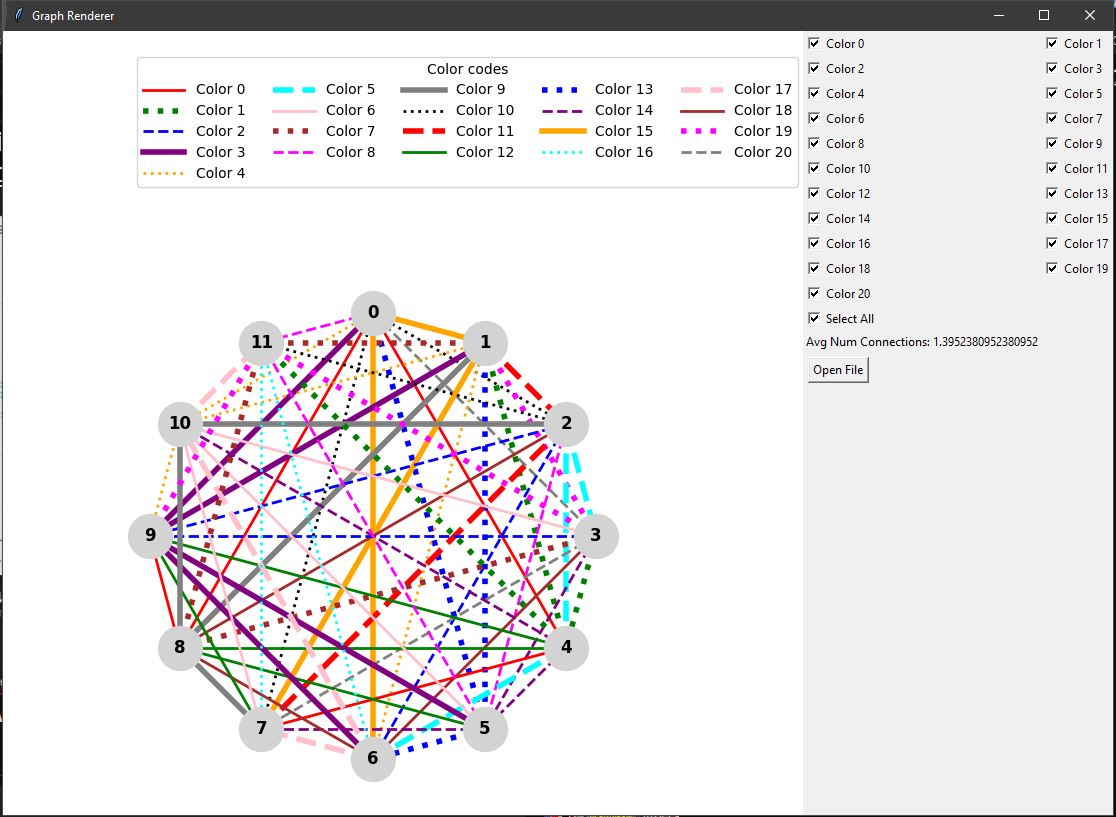}
  \caption{Example solution.}
  \label{fig:example_solution}
\end{figure}

\section*{Statements and Declarations}

The authors declare that no funds, grants, or other support were received during the preparation of this manuscript.

The authors have no relevant financial or non-financial interests to disclose.

All authors contributed to the study conception and design. The first draft of the manuscript was written by Jorge Cervantes-Ojeda, María C. Gómez-Fuentes and Christian Rubio-Montiel and all authors commented on previous versions of the manuscript. All authors read and approved the final manuscript.

\bibliographystyle{amsplain}
\bibliography{biblio}
\label{sec:biblio}

\end{document}